\newcommand{\E}{\mathbb{E}}
\newcommand{\Z}{\mathbb{Z}}
\newcommand{\pp}{\mathbb{P}}
\newcommand{\kC}{\mathcal{C}}
\newcommand{\sC}{\mathscr{C}}
\newcommand{\kE}{\mathcal{E}}
\newcommand{\pn}{\mathbb{P} }
\newcommand{\en}{\mathbb{E} }
\newcommand{\lin}{\left[\kern-0.15em\left[}
\newcommand{\rin} {\right]\kern-0.15em\right]}
\newcommand{\linf}{[\kern-0.15em [}
\newcommand{\rinf} {]\kern-0.15em ]}
\newcommand{\ilin}{\left]\kern-0.15em\left]}
\newcommand{\irin} {\right[\kern-0.15em\right[}
\begin{document}

\section{Introduction}
In this paper, we study the contact process on $G_s$, the one-dimensional long range percolation graph with exponent $s>1$, defined as follows:  independently for any $i$ and $j$ in $\mathbb{Z}$ there is an edge connecting them with probability $|i-j|^{-s}$. In particular, $G$ contains $\Z$ so it is connected.

\vspace{0.2 cm}
On the other hand, the contact process was introduced in an article of T. E. Harris \cite{H} and is defined as follows: given a locally finite graph $G=(V,E)$ and $\lambda >0$, the contact process on $G$ with infection rate $\lambda$ is a Markov process $(\xi_t)_{t\geq 0}$ on $\{0,1\}^V$. Vertices of $V$ (also called sites) are regarded as individuals which are either infected (state $1$) or healthy (state $0$). By considering $\xi_t$ as a subset of $V$ via $\xi_t \equiv \{v: \xi_t(v)=1\},$ the transition rates are given by
\begin{align*}
\xi_t \rightarrow \xi_t \setminus \{v\} & \textrm{ for $v \in \xi_t$ at rate $1,$ and } \\
\xi_t \rightarrow \xi_t \cup \{v\} & \textrm{ for $v \not \in \xi_t$ at rate }  \lambda \, \textrm{deg}_{\xi_t}(v),
\end{align*}
where $\textrm{deg}_{\xi_t}(v)$ denotes the number of infected neighbors of $v$ at time $t$. Given  $A \subset V$, we denote by $(\xi_t^A)_{t \geq 0}$ the contact process with initial configuration $A$ and if $A=\{v\}$ we simply write $(\xi^v_t)$. 

Since the contact process is monotone in $\lambda$,  we can define the critical value 
\begin{align*}
 \lambda_c(G)= \inf \{\lambda: \pp(\xi ^v_t \neq \varnothing \,\forall t) >0\}.
\end{align*} 
This definition does not depend on the choice of $v$ if $G$ is connected. If $G$ has bounded degree, then there exists a non-trivial sub-critical phase, i.e. $\lambda_c >0$, as the contact process is stochastically dominated by a continuous time branching random walk with reproduction rate $\lambda$. Thus for  integer lattices and regular trees, the critical value is  positive. The behavior of the contact process on these graphs was extensively investigated, see for instance \cite{L, P, S}.

In contrast, there is a little knowledge about the sub-critical phase on  unbounded degree graphs. For  Galton-Watson trees,  Pemantle proved in \cite{PS} that if the reproduction law $B$ asymptotically satisfies that $\pp(B \geq x) \geq \exp(-x^{1- \varepsilon})$, for some $\varepsilon>0$, then $\lambda_c =0$. Recently, in \cite{MS}, by introducing the notion of cumulatively merged partition (abbr. CMP) (see Section 2.2), the authors provided  a sufficient condition on graphs ensuring that $\lambda_c >0$. As an application, they show that the contact process on  random geometric graphs and  Delaunay triangulations  exhibits a non-trivial phase transition. 

\vspace{0.2cm}
The long range percolation graph was first introduced in \cite{Sc, ZPL}.  Then it gained interest in some contexts such as the graph distance, diameter, random walk, see  \cite{ CS2} for a list of reference. The long range percolation is locally finite if and only if $s>1$, so  we only consider the contact process on such graphs.  Moreover, it follows from the ergodicity of  $G_s$ that there is a non negative  constant $ \lambda_c(s)$, such that
\begin{align} \label{egd}
\lambda_c(G_s) = \lambda_c(s) \textrm{   for almost all graphs } G_s.
\end{align}
It is clear that the sequence of graphs $(G_s)$ is stochastically decreasing in $s$ in the sense that $G_{s_1}$ can be coupled as a subgraph of $G_{s_2}$ if $s_1 \geq s_2$. Therefore $\lambda_c(s_1) \geq \lambda_c(s_2)$. Hence, we can define 
\begin{align} 
s_c = \inf \{s: \lambda_c(s)>0\}.
\end{align}    
We will apply the method in \cite{MS} to show that $s_c < + \infty$. Here is our main result. 
\begin{theorem} \label{the}
We have
\begin{align*}
s_c \leq 102.
\end{align*}
\end{theorem}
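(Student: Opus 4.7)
The plan is to verify the sufficient condition of M\'enard and Singh from \cite{MS} for the graph $G_s$ when $s > 102$. Their criterion asserts $\lambda_c(G) > 0$ as soon as the cluster $\kC_0$ of the origin in an appropriately parametrized CMP satisfies a suitable tail estimate (typically, polynomial decay with a high enough exponent). I would begin by recalling the CMP construction on $G_s$: given a parameter $\alpha > 0$, start from the singleton partition of $\Z$ and iteratively merge any two classes $C, C'$ whose ``masses'' $|C|^\alpha$ and $|C'|^\alpha$ both exceed the graph distance $d_{G_s}(C, C')$. The output is a random partition of $\Z$, and the M\'enard-Singh criterion reduces the problem to controlling $|\kC_0|$.

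Next I would collect a few basic estimates on $G_s$. Because $\sum_{j \neq 0}|j|^{-s} < \infty$ for $s > 1$, the degree of each vertex is a.s.\ finite, and for large $s$ its tail decays polynomially of high order. More importantly, for two disjoint intervals $I, J \subset \Z$ with $|I|,|J| \leq L$ at distance at least $D$, the expected number of long edges joining them is at most $L^2 D^{-s}$. This gives a first-moment bound controlling the probability that two clusters of a given size are forced to merge because of long edges crossing a prescribed scale.

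The heart of the argument is then a multi-scale cascade estimate: for $\kC_0$ to extend to distance $N$, long edges must trigger mergers at each dyadic scale $2^n$ with $n \leq \log_2 N$, and the mass accumulated at scale $n$ must be large enough to trigger the next merger at scale $n+1$. I would estimate the probability of such a chain recursively, using the first-moment bound above at each scale, and then sum the resulting geometric series after balancing the exponents. The main obstacle is precisely this balancing: one must choose $\alpha$ so that the mass generated by a merger at scale $2^n$ does not exceed $(2^{n+1})^{1/\alpha}$ (otherwise the cascade never terminates), while keeping $\alpha$ large enough so that the CMP-cluster tail is integrable in the sense required by \cite{MS}. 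The numerical value $s_c \leq 102$ comes from optimizing these two competing constraints against the edge-decay exponent $s$ and the polynomially many scales appearing in the cascade. Once the tail bound on $|\kC_0|$ is established, Theorem~\ref{the} follows by direct application of the M\'enard-Singh criterion together with the ergodicity statement \eqref{egd}.
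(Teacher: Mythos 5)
Your proposal does not reach a proof; there are two genuine gaps. First, you have misstated the CMP that the M\'enard--Singh criterion actually requires. The relevant partition is $\sC(G,r_{\vartriangle},\alpha)$ with \emph{thresholded degree weights} $r_{\vartriangle}(x)=\deg(x)1(\deg(x)\geq \vartriangle)$, and the admissibility condition compares $d(C,C')$ with $\min\{r(C),r(C')\}^{\alpha}$ where $r(C)=\sum_{x\in C}r(x)$. If, as in your sketch, the ``mass'' of a class were its cardinality $|C|$, then any two adjacent singletons would already violate admissibility ($d=1\leq \min\{1,1\}^{\alpha}$) and the whole of $\Z$ would merge into one infinite cluster, so the criterion could never be verified. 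The threshold $\vartriangle$, which makes the weight of most vertices equal to zero, is the engine of the entire argument and is absent from your plan. Second, your multi-scale cascade attempts to re-derive, directly on the long-range graph $G_s$, the subcriticality result that \cite{MS} proves for the CMP on $\Z^d$ with i.i.d.\ weights (their Proposition 3.7). This is much harder than you acknowledge: on $G_s$ the graph distance $d_{G_s}(C,C')$ is itself random and can be far smaller than the Euclidean distance, and your first-moment bound on crossing edges does not control the actual merging mechanism, which is driven by the distance-versus-accumulated-weight comparison, not by the presence of individual long edges. You give no recursion that closes, and the claim that ``the numerical value $102$ comes from optimizing these two competing constraints'' is not substantiated by any computation.

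The paper avoids all of this by a structural reduction you are missing: for $s>2$ the graph a.s.\ has infinitely many \emph{cut-points} (vertices $k$ with no edge straddling them), so $G_s$ is a gluing of i.i.d.\ finite blocks. Projecting each block to a single vertex of $\Z$ shows $\sC(\tilde G_s,r_{\vartriangle},5/2)\preceq \sC(\Z,Z_{\vartriangle},5/2)$, where $Z_{\vartriangle}$ is the total thresholded degree weight of one block; this puts one exactly in the i.i.d.-weights-on-$\Z$ setting of \cite[Proposition~3.7]{MS}, whose hypothesis is the moment bound $\E((Z_{\vartriangle}/\beta)^{\gamma})\leq 1$ with $\gamma=(4\alpha d)^2=100$ for $\alpha=5/2$, $d=1$. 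The constant $102$ then has a transparent origin: the block length $D$ is a sum of increments $\varepsilon_i$ with tails $\pp(\varepsilon_i>\ell)\lesssim \ell^{2-s}$, and finiteness of $\E(D^{100})$ (via H\"older with an exponent $p$ slightly above $1$) requires $2-s+100p-1<-1$, i.e.\ $s>102$. If you want to salvage your approach, you would need either to supply the cut-point decomposition or to carry out the full multi-scale renormalization of \cite{MS} on the long-range graph itself, neither of which your sketch does.
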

There is a phase transition in the structure of the long range percolation. If $s<2$,  the graph $G_s$ exhibits the small-world phenomenon. More precisely, the distance between $x$ and $y$ is of order $(\log |x-y|)^{\varkappa + o(1)}$ with $\varkappa = \varkappa(s)>1$, with probability tending to 1 as $|x-y| \rightarrow \infty$, see for instance \cite{B2}. In contrast, if $s >2$,  the graph somehow looks like $\mathbb{Z}$ (see Section \ref{stg}) and the distance now is of order $|x-y|$, see \cite{BB}.  On the other hand, as mentioned above, we know that $\lambda_c(\mathbb{Z}) >0$. Hence, we conjecture that 
\begin{align*}
s_c \leq 2.
\end{align*}
The results in \cite{MS} can be slightly improved and thus we could  get a better bound on $s_c$, but it would still be far from the critical value $2$.

\vspace{0,2 cm}
\noindent The paper is organized as follows. In Section 2, we first describe the structure of the graph and show that $G_s$ can be seen as the gluing of i.i.d. finite subgraphs. Then  we recall  the definitions and  results of  \cite{MS} on the CMP. By studying the moment of the total weight of a subgraph,  we are able to apply the results from \cite{MS} and  prove our main theorem. 
\section{Proof of Theorem \ref{the}} 
\subsection{Structure of the graph} \label{stg}
We fix $s>2$. For any $k \in \mathbb{Z}$, we say that $k$ is a {\it cut-point} if there is no edge $(i,j)$ with $i<k$ and $j >k$. 
\begin{lemma} \label{lgd}
The following statements hold.
\begin{itemize}
\item[(i)] For all $k\in \mathbb{Z}$
$$\pp(k \textrm{ is a cut-point}) = \pp(0 \textrm{ is a cut-point}) >0.$$
As a consequence, almost surely there exist infinitely many cut-points. \\
\item[(ii)] The subgraphs induced in the intervals between  consecutive cut-points are i.i.d. In particular, the distances between consecutive cut-points form a sequence of i.i.d. random variables.
\end{itemize}
\end{lemma}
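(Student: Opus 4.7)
The plan is to exploit two elementary features of the edge measure: the indicators $(\omega_{\{i,j\}})_{i<j}$ are independent Bernoullis, and their joint law is invariant under integer translations since the parameter $|i-j|^{-s}$ depends only on the distance $|i-j|$.

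For part (i), translation invariance gives $\pp(k \textrm{ is a cut-point})=\pp(0 \textrm{ is a cut-point})$ at once. To show that this common value is strictly positive, I would write
\begin{align*}
\pp(0 \textrm{ is a cut-point}) \;=\; \prod_{i,j \geq 1}\bigl(1-(i+j)^{-s}\bigr),
\end{align*}
which is nonzero if and only if $\sum_{i,j \geq 1}(i+j)^{-s}<\infty$. Collecting pairs by their common sum $i+j=n$ gives $\sum_{n \geq 2}(n-1)n^{-s}$, which converges precisely because $s>2$. For the almost sure existence of infinitely many cut-points, I would invoke ergodicity of the product Bernoulli measure under the integer shift (standard for independent shift-invariant marginals): Birkhoff's theorem then shows that the asymptotic density of cut-points equals $\pp(0\textrm{ is a cut-point})>0$ almost surely.

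For part (ii), I would argue by regeneration. Conditioning on $\{0 \textrm{ is a cut-point}\}$ amounts to requiring every edge with one negative and one positive endpoint to be absent, and since the edge indicators are independent this conditioning affects only those cross edges. Consequently, under this conditional measure, the induced subgraph on $[0,\infty)$ retains its original long range percolation law and is independent of the induced subgraph on $(-\infty,0]$. Writing $C_1$ for the first positive cut-point (finite almost surely by part (i)), a brief check shows that $C_1$ is measurable with respect to the edges in $[0,\infty)$. Applying the same observation at $C_1$: conditionally on $C_1=c_1$, the induced subgraph on $[c_1,\infty)$ has the law of long range percolation on $[c_1,\infty)$ and is independent of the induced subgraph on $[0,c_1]$. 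Translation invariance then identifies it with a fresh copy of the original process, and iterating yields that the sequence of pieces $\bigl(G_s\big|_{[C_i,C_{i+1}]}\bigr)_{i \geq 0}$ is i.i.d.; the left tail is handled symmetrically. The i.i.d.\ property of the distances $C_{i+1}-C_i$ follows as an immediate corollary.

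The main point requiring any care is the bookkeeping of edges incident to cut-point vertices themselves. Defining each piece to be the induced subgraph on the closed interval $[C_i,C_{i+1}]$, every edge of $G_s$ is either internal to a unique such interval or straddles a cut-point and is therefore absent by definition; this makes the edge families of distinct pieces disjoint independent Bernoulli families, and the regenerative argument above goes through cleanly.
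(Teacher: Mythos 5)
Your proposal is correct and follows essentially the same route as the paper: translation invariance plus the convergence of $\sum_{i,j\geq 1}(i+j)^{-s}$ for $s>2$ to get positivity, the ergodic theorem for infinitely many cut-points, and the disjointness/independence of the edge families attached to distinct inter-cut-point intervals for (ii). The only cosmetic difference is that the paper extracts the explicit lower bound $e^{2/(2-s)}$ (reused later in Proposition 2.2), whereas you invoke the qualitative criterion that an infinite product $\prod(1-a_n)$ is positive iff $\sum a_n<\infty$, and you spell out the regeneration step that the paper dismisses as immediate.
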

\begin{proof}
We first prove (i). Observe that 
\begin{eqnarray*}
\pp(k \textrm{ is a cut-point}) &= & \pp(0 \textrm{ is a cut-point}) \notag\\
&= & \prod \limits_{i<0<j} \left(1- |i-j|^{-s} \right) \notag\\
& \geq & \exp \left( -  2 \sum \limits_{i<0<j} |i-j|^{-s}  \right) \notag \\
& \geq & e^{2/(2-s)},
\end{eqnarray*}
where we used that $1-x \geq \exp(-2x)$ for $0 \leq x \leq 1/2$ and 
\begin{eqnarray*}
\sum \limits_{i<0<j} |i-j|^{-s} = \sum \limits_{i,j \geq 1} (i+j)^{-s}  & \leq & \frac{1}{s-1}\sum \limits_{i \geq 1} i^{1-s} \\
& \leq & \frac{1}{s-1} \left( 1+ \frac{1}{s-2} \right) = \frac{1}{s-2},
\end{eqnarray*}
using series integral comparison.

Then the ergodic theorem implies that there are infinitely many cut-points a.s.

Part (ii) is immediate, since there are no edges between different intervals between consecutive cut-points.
\end{proof}
We now study  some properties of the distance between two consecutive cut-points.  
\begin{proposition} \label{pgd} 
Let $D$ be the distance between two consecutive cut-points. Then there exists a sequence of integer-valued random variables $(\varepsilon_i)_{i\geq 0}$ with $\varepsilon_0=1$, such that 
\begin{itemize}
\item[(i)] $D= \sum \limits_{i=0}^T \varepsilon_i$  with $T= \inf \{i \geq 1: \varepsilon_i =0\}$, \\
\item[(ii)] $T$ is stochastically dominated by a  geometric random variable with mean $ e^{2/(2-s)}$,\\
\item[(iii)] for all $i, \ell \geq 1$
\begin{align*}
\pp(\varepsilon_i > \ell \mid T \geq i ) \leq   \ell^{2-s}/(s-2).
\end{align*}
\end{itemize}
\end{proposition}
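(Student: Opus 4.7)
The plan is to construct $(\varepsilon_i)$ through a sequential exploration of $G_s$ to the right of the cut-point~$0$. Set $S_0=0$, $\varepsilon_0=1$, $S_1=1$, and for $i\geq 1$ define
\[
R_i=\max\bigl(\{S_i\}\cup\{j:\exists a\leq S_i-1,\ (a,j)\in E,\ j>S_i\}\bigr),\quad \varepsilon_i=R_i-S_i,\quad S_{i+1}=R_i,
\]
and stop at $T=\inf\{i\geq 1:\varepsilon_i=0\}$. For $i\geq 1$, $S_i$ is a cut-point precisely when $\varepsilon_i=0$; moreover, for $i\leq T$ any integer $k$ strictly between $S_{i-1}$ and $S_i$ is crossed by the edge attaining $R_{i-1}=S_i$ (whose left endpoint is $\leq S_{i-1}-1<k$ and whose right endpoint is $S_i>k$) and hence is not a cut-point. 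Consequently $S_T$ is the first cut-point strictly to the right of~$0$, giving $D=S_T=\sum_{i=0}^{T}\varepsilon_i$ as required in~(i).

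The crux of~(ii) and~(iii) is a \emph{fresh-edge property}: conditionally on $\{0\text{ is a cut-point}\}$, on $\kF_{i-1}=\sigma(\varepsilon_1,\dots,\varepsilon_{i-1})$, and on $\{T\geq i\}$, the family $\{(a,j):a\in\linf S_{i-1},S_i-1\rinf,\ j>S_i\}$ consists of independent Bernoulli edges with their original parameters $|j-a|^{-s}$. Indeed, conditioning on the cut-point at~$0$ forbids only edges from $(-\infty,-1]$ to $[1,\infty)$, and for $j<i$ the value of $R_j$, given $R_1,\dots,R_{j-1}$, only adds information about edges with left endpoint in $\linf S_{j-1},S_j-1\rinf$. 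Since these intervals for $j<i$ together cover $\linf 0,S_{i-1}-1\rinf$, which is disjoint from $\linf S_{i-1},S_i-1\rinf$, the fresh edges at step~$i$ remain unqueried.

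Given the fresh-edge property, parts~(ii) and~(iii) reduce to series estimates in the spirit of Lemma~\ref{lgd}. Writing $n=\varepsilon_{i-1}$, a union bound gives
\[
\pp(\varepsilon_i>\ell\mid\kF_{i-1},T\geq i)\leq\sum_{p=1}^{n}\sum_{k\geq\ell+1}(k+p)^{-s}\leq\frac{1}{s-1}\sum_{p\geq1}(\ell+p)^{1-s}\leq\frac{\ell^{2-s}}{(s-1)(s-2)}\leq\frac{\ell^{2-s}}{s-2},
\]
uniformly in $n$, which yields~(iii) after averaging over $\kF_{i-1}$. For~(ii), the event $\{\varepsilon_i=0\}$ is the simultaneous absence of all fresh edges; using $(j-a)^{-s}\leq 2^{-s}\leq 1/2$ together with $1-x\geq e^{-2x}$ for $x\in[0,1/2]$,
\[
\pp(\varepsilon_i=0\mid\kF_{i-1},T\geq i)\geq\exp\!\Bigl(-2\sum_{p\geq1}\sum_{k\geq1}(k+p)^{-s}\Bigr)\geq e^{-2/(s-2)}=e^{2/(2-s)},
\]
so $T$ is stochastically dominated by a geometric variable with success parameter $e^{2/(2-s)}$, proving~(ii). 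The only nontrivial step is the fresh-edge property: one has to verify inductively which portion of the graph each $R_j$ exposes, in order to certify that the edges needed at step~$i$ are independent of the past; once this is in place, the rest is routine integral-series comparison identical to that used in Lemma~\ref{lgd}.
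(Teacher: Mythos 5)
Your construction is essentially the paper's: the paper defines the same exploration via $X_i=\max\{k:\exists\, X_{i-2}\le j\le X_{i-1}-1,\ j\sim k\}$ (which coincides with your $R_i$, since edges from earlier blocks are already known not to reach past $S_i$), and proves (ii) and (iii) with the identical product/series estimates. Your proof is correct; the only difference is that you spell out the ``fresh-edge'' conditional-independence step that the paper uses implicitly.
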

\begin{proof} To simplify notation, we assume that $0$ is a cut-point.  Set $X_{-1}=0$ and  $X_0=1$, then  we define  for $i\geq 1$
\begin{align*}
X_i & = \max \{k: \exists \, X_{i-2} \leq j \leq X_{i-1}-1,  \,\, j \sim k\}, \\
\varepsilon_i &= X_i -X_{i-1}.
\end{align*}
Then $\varepsilon_i \geq 0$ and  we define
 \[T= \inf \{i \geq 1: X_i = X_{i-1}\} = \inf \{i \geq 1: \varepsilon_i =0 \}.\]
We have $X_i=X_{i-1}$ for all $i\geq T$, or equivalently $\varepsilon_i =0$ for all $i\geq T$. 

Note that $X_T$ is the closest cut-point on the right of $0$, so it has the same law as $D$, by definition.  Moreover
 \begin{align} \label{ect}
X_T = \sum \limits_{i=0}^T \varepsilon_i,
\end{align} 
which implies (i). Observe that for $i\geq 1$ we have $\{T \geq i\}=\{X_{i-2}<X_{i-1}\}$ and
\begin{eqnarray*}
\pp(T=i \mid T \geq i )& = & \pp (X_i = X_{i-1} \mid  X_{i-2} < X_{i-1}) \\
& = & \pp ( \nexists \,  X_{i-2} \leq j < X_{i-1}< k: \, j \sim k \mid  X_{i-2} < X_{i-1}) \\ 
& \geq & \prod  \limits_{j<0<k}\left(1- |j-k|^{-s} \right)\\
& \geq & e^{2/(2-s)}.
\end{eqnarray*}
This implies (ii). For (iii), we note that for $i, \ell \geq 1$,
 \begin{eqnarray*}
 \pp (X_{i} \leq X_{i-1} + \ell \mid X_{i-2} < X_{i-1}) 
  & \geq & \prod_{\substack{j<0 \\ k > \ell }}   \left(1- |j-k|^{-s} \right)\\
 & \geq & 1 -  \sum_{\substack{j<0 \\ k > \ell }}   |j-k|^{-s}.
 \end{eqnarray*}
We have
\begin{eqnarray*}
\sum_{\substack{j<0 \\ k > \ell }}  |j-k|^{-s}   & = & \sum \limits_{j=1}^{ \infty } \sum\limits_{k=\ell+1}^{\infty} (k + j )^{-s} \\
& \leq & \frac{1}{s-1} \sum \limits_{j=1}^{ \infty } (j + \ell )^{1-s} \\
& \leq & \ell^{2-s}/(s-2).
\end{eqnarray*}
Therefore,
\begin{align*}
\pp( \varepsilon_i  > \ell \mid T \geq i ) \leq  \ell^{2-s}/(s-2), 
\end{align*}
which proves  (iii).
\end{proof}
Since the definition of $\lambda_c$ is independent of the starting vertex, we can assume that the initially infected vertex is a cut-point. 

It will be convenient to assume that $0$ is a cut-point. Suppose that conditioned on $0$ being a cut-point and infected at the beginning, we can prove that $\lambda_c >0$. Since the distribution is invariant under translations, we have $\lambda_c >0$ for the contact process starting from any cut point.

Hence, from now on we condition on the event $0$ is a cut-point. Set $K_0=0$,  for $i\geq 1$, we call $K_i$ (resp. $K_{-i}$) the $i^{th}$ cut point from the right (resp. left) of $0$. By Lemma \ref{lgd} (ii), the  graphs induced in the intervals $[K_i, K_{i+1})$ are i.i.d.  Therefore, $G_s$ is isomorphic to the graph $\tilde{G}_s$ obtained by gluing an i.i.d. sequence of graphs with distribution of the graph $[0,K_1)$. We have to prove that the contact process on $\tilde{G}_s$ exhibits a non-trivial phase transition.
\subsection{Cumulatively merged partition}
We recall here the  definitions introduced in \cite{MS}. Given a locally finite graph $G=(V,E)$, an expansion exponent $\alpha \geq 1$, and a sequence of non-negative weights defined on the vertices 
\[(r(x), x \in V) \in [0, \infty)^V,\]  
a partition $\kC$ of the vertex set $V$ is said to be $(r, \alpha)$-admissible if it satisfies 
\[\forall \, C, C' \in \kC, \quad C \neq C' \qquad \Longrightarrow \qquad d(C,C') > \min \{r(C),r(C')\}^{\alpha},\]
with 
\[r(C) = \sum \limits_{x \in C} r(x).\]
We call \textit{cumulatively merged partition} (CMP) of the graph G with respect to $r$ and $\alpha$ the finest $(r, \alpha)$- admissible partition and denote it by $\mathscr{C}(G,r, \alpha)$. It is the intersection of all $(r, \alpha)$-admissible partitions of the graph, where the intersection is defined as follows: for any sequence of partitions $(\kC_i)_{i \in I}$, 
\begin{align*}
x\sim y \textrm{ in } \cap_{i\in I}\kC_i \quad \textrm{ if } \quad x \sim y \textrm{ in $\kC_i$ for all } i \in I.
\end{align*}
As for Bernoulli percolation on $\mathbb{Z}^d$, the question we are interested in is  the existence of an infinite cluster (here an infinite partition).  For the CMP on $\mathbb{Z}^d$ with i.i.d. weights, we have the following result.
\begin{proposition} \cite[Proposition 3.7]{MS} \label{pms}
For any $\alpha \geq 1$, there exists a positive constant $\beta_c=\beta_c(\alpha)$, such that for any  positive random variable $Z$ satisfying $\E(Z^{\gamma}) \leq 1$ with $\gamma =(4 \alpha d)^2$ and any  $\beta< \beta_c$, almost surely $\sC(\mathbb{Z}^d, \beta Z,\alpha)$-the CMP  on $\mathbb{Z}^d$ with expansion exponent $\alpha$ and i.i.d. weights distributed as $\beta Z$-has  no infinite cluster. 
\end{proposition}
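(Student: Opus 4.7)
The plan is to bound $\phi(L) := \pp(\text{the CMP cluster of } 0 \text{ in } \sC(\Z^d, \beta Z, \alpha) \text{ has diameter} \geq L)$ and show it is summable along a geometric sequence of scales. Combined with translation invariance, this yields via Borel--Cantelli that the origin's cluster is almost surely finite, so no infinite cluster exists. The strategy is a multi-scale induction propagating smallness of $\phi$ from scale $L$ to scale $3L$.

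The core of the argument is a structural lemma on the merging procedure that defines $\sC$. If the cluster $C \ni 0$ has diameter at least $3L$, pick extremal points $x, y \in C$ with $d(x,y) \geq 3L$ and trace the first step in the construction of $\sC$ at which $x$ and $y$ end up in the same piece. This step merges two sub-clusters $C_1 \ni x$, $C_2 \ni y$ satisfying $d(C_1, C_2) \leq \min\{r(C_1), r(C_2)\}^\alpha$ and $\text{diam}(C_1) + d(C_1, C_2) + \text{diam}(C_2) \geq 3L$. Hence either (a) one of $C_1, C_2$ already has diameter $\geq L$ (so the bad event reoccurs at a smaller scale near some vertex), or (b) some sub-cluster carries weight $r(C_i) \geq L^{1/\alpha}$.

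The inductive alternative (a) is controlled by translation invariance plus a union bound over the $O(L^d)$ possible base-points of a second sub-cluster of diameter $\geq L$ inside a box of radius $O(L)$ around $0$, giving a contribution of at most $C_d L^{2d} \phi(L)^2$. The heavy-weight alternative (b) is where the moment hypothesis $\E[Z^\gamma] \leq 1$ enters. A crude box-wise Markov estimate on $\sum_{x \in B_L} \beta Z_x$ is insufficient because this sum grows polynomially with $L$; instead one exploits the hierarchical merging tree of the offending sub-cluster. Each internal node of the tree corresponds to a merger constrained by an admissibility inequality, and iterating a Markov/convexity bound along the tree reduces the probability by a factor $\beta^\gamma$ per level against a polynomial-in-$L$ entropy of tree shapes and spatial locations. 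The choice $\gamma = (4\alpha d)^2$ is tuned precisely so that, after summing over all admissible tree shapes and over $O(L^d)$ locations, the contribution of (b) is at most $\beta^\gamma L^{-c(d,\alpha)}$ for some $c>0$.

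Combining (a) and (b) yields a recursion of the form $\phi(3L) \leq C_d L^{2d} \phi(L)^2 + \beta^\gamma L^{-c}$, valid for all large $L$. Choosing $\beta_c$ small enough guarantees $\phi(L_0) \leq L_0^{-3d}$ at a fixed base scale $L_0$, since for $\beta$ small each initial weight $\beta Z_x$ is stochastically much less than $1$, so most CMP clusters are trivial up to that scale. The recursion then iterates to give $\phi(3^k L_0) \to 0$ super-polynomially in $k$, which is more than enough for summability. The principal obstacle is step (b): one must avoid the naive box-wise Markov bound by unfolding the merging tree of the heavy sub-cluster, and verify that the exponent $(4\alpha d)^2$ is large enough to dominate the combined combinatorial cost of tree shapes, locations and scales. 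This bookkeeping --- balancing the Markov exponent against the polynomial entropy on $\Z^d$ --- is what forces the specific value of $\gamma$ in the statement.
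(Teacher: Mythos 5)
First, a point of comparison: the paper does not prove this proposition at all --- it is quoted from \cite{MS}, and the Appendix of the paper only walks through the proof of \cite{MS} (their Lemmas 3.9, 3.10, 3.11 and the conclusion) in order to extract an explicit lower bound on $\beta_c$ depending only on $\E(Z^\gamma)$. Your attempt is therefore being measured against the Ménard--Singh argument itself, which is indeed a multi-scale renormalization, so your overall plan is in the right family. But the sketch has genuine gaps at exactly the points where that proof is hard.

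The main gap is decoupling. The CMP cluster of the origin is not a local function of the weights: a distant vertex (or merged cluster) of very large cumulative weight can reach across an arbitrarily long gap and force mergers near the origin, so the event ``a sub-cluster of diameter $\geq L$ sits near $x$'' is not measurable with respect to the weights in a box around $x$. Consequently the two occurrences of the scale-$L$ event in your alternative (a) are not independent, and the bound $C_d L^{2d}\phi(L)^2$ does not follow from translation invariance plus a union bound. This non-locality is precisely why \cite{MS} introduce \emph{stable sets} (regions certified not to interact with their exterior) and run the induction over the super-geometric scales $L_n=2^{c^n}$, $R_n=L_1\cdots L_n$ rather than a geometric sequence $3^kL_0$; the same issue invalidates your base case, since ``all weights in a box are small, hence clusters there are trivial'' only holds once the box is known to be stable. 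A second gap is alternative (b), which you defer entirely: the claimed ``polynomial-in-$L$ entropy of tree shapes'' is unsubstantiated (the number of merging trees on $n$ leaves is exponential in $n$, and $n$ is not a priori polynomially bounded in $L$), and extracting a factor $\beta^\gamma$ per level requires the admissibility inequalities at distinct nodes to involve disjoint collections of weights --- again a decoupling statement. This is the content of Lemmas 3.9--3.11 of \cite{MS}, none of which is reconstructed here, so the proposal is a plausible roadmap rather than a proof.
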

We note that in \cite[Proposition 3.7]{MS}, the authors only assume that $\en(Z^{\gamma}) < \infty$ and they do not   precise the dependence of   $\beta_c$ with $\E(Z^{\gamma})$.  However, we can deduce from their proof a lower bound on $\beta_c$ depending only on $\E(Z^{\gamma})$ (and only on $\alpha, \gamma, d$ if we suppose $\en(Z^{\gamma}) \leq 1$), see  Appendix for more details. Finally, our $\beta_c(\alpha)$ is a lower bound of the critical parameter $\lambda_c(\alpha)$ introduced by M\'enard and Singh.

Using the notion of CMP, they give a sufficient condition on a graph $G$ ensuring that the critical value of the contact process is  positive. 
\begin{theorem} \cite[Theorem 4.1]{MS} \label{tms}
Let $G=(V,E)$ be a locally finite connected graph. Consider $\mathscr{C}(G,r_{\vartriangle},\alpha)$  the   CMP on G with  expansion exponent $\alpha$ and degree weights
$$r_{\vartriangle}(x)= \deg(x) 1(\deg(x) \geq \vartriangle).$$
Suppose that for some   $\alpha \geq5/2$ and  $\vartriangle \geq 0$, the partition $\mathscr{C}(G,r_{\vartriangle},\alpha)$ has no infinite cluster. Then 
\[\lambda_c(G)>0.\]
\end{theorem}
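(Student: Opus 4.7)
My plan is to combine the Harris graphical construction with the CMP geometry in a renormalisation argument on the cluster graph. Write $\sC = \sC(G,r_\vartriangle,\alpha)$ and call a vertex \emph{heavy} if its degree is at least $\vartriangle$ and \emph{light} otherwise; by hypothesis every $C \in \sC$ is finite, and the admissibility condition $d(C,C')>\min(r_\vartriangle(C),r_\vartriangle(C'))^\alpha$ for distinct $C, C'$ is the key geometric input: any path joining two distinct clusters must traverse a long corridor of light vertices, whose degrees are bounded by $\vartriangle -1$.

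The first step is a single-cluster estimate. Fix $C \in \sC$ and an infection entering $C$ through one vertex. I would compare the contact process restricted to a suitable enlargement of $C$ with a continuous-time branching random walk killed at the boundary of $C$ to extract: (a) the total time $\tau_C$ during which $C$ carries the infection has exponential moments with parameter polynomial in $r_\vartriangle(C)$, provided $\lambda$ is sufficiently small relative to $r_\vartriangle(C)$; (b) conditional on $\tau_C$, the expected number of infection arrows emitted from $C$ into its light boundary is at most $c\lambda r_\vartriangle(C)\en[\tau_C]$. Both bounds exploit only the finiteness of $C$ and its total heavy weight $r_\vartriangle(C)$, not its internal geometry.

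The second step bounds cluster-to-cluster transmission. Any infection reaching a light boundary vertex of $C$ must propagate through light vertices for at least $\min(r_\vartriangle(C),r_\vartriangle(C'))^\alpha$ steps before meeting another cluster $C'$. On the light subgraph the contact process is stochastically dominated by a branching random walk with mean offspring $\lambda \vartriangle$; once $\lambda \vartriangle$ is small, the probability of such a long-range transmission decays geometrically in the distance travelled. Combined with step one, this produces an effective transmission kernel on the cluster graph with super-exponential decay in $\min(r_\vartriangle(C),r_\vartriangle(C'))^\alpha$.

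Finally, I would close the proof by summing over cluster-paths emanating from the cluster of the initial vertex, using a Peierls-style argument. Since $\sC$ has no infinite cluster and the transmission kernel decays super-exponentially, for small enough $\lambda$ the expected total number of clusters ever infected is finite, which gives extinction and hence $\lambda_c(G)>0$. The main obstacle I anticipate is closing the self-consistency loop between steps one and two: the time $\tau_C$ in (a) depends on the infection that re-enters $C$ from its neighbours, which in turn depends on the output bound (b). The natural way out is a monotone coupling indexed by a finite exhaustion of $\sC$, together with a time-ordered sequential exposure of the Poisson marks. I expect the threshold $\alpha \geq 5/2$ to emerge precisely at this balancing step, where the polynomial growth of $\en[\tau_C]$ in $r_\vartriangle(C)$ must be absorbed by the super-exponential factor in $r_\vartriangle(C)^\alpha$ after paying for path-entropy in the cluster graph; the value $5/2$ is then the tight exponent for which this trade-off can be made uniformly.
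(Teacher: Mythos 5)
This theorem is not proved in the paper at all: it is quoted verbatim from M\'enard and Singh \cite[Theorem 4.1]{MS} and used as a black box, so there is no internal proof to compare against. Judged against the actual argument in \cite{MS}, your sketch has the right architecture (clusters of the CMP sustain the infection, the admissibility condition forces long low-degree corridors between clusters, and one sums over transmissions), but two of your key steps are wrong as stated.

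First, your single-cluster estimate (a) cannot hold. For any fixed $\lambda>0$, a cluster containing a vertex of degree $n$ (a star) sustains the infection for a time of order $\exp(c\lambda^2 n)$ with high probability, so the occupation time $\tau_C$ is typically \emph{exponentially} large in $r_{\vartriangle}(C)$, not controlled by exponential moments with a parameter polynomial in $r_{\vartriangle}(C)$. Your caveat ``provided $\lambda$ is sufficiently small relative to $r_{\vartriangle}(C)$'' does not save this: $\lambda$ must be chosen once, uniformly, while the cluster weights are unbounded. The correct strategy, and the one used in \cite{MS}, is to accept the bound $\tau_C\lesssim e^{c\, r_{\vartriangle}(C)}$ and beat it with the corridor length $\min(r_{\vartriangle}(C),r_{\vartriangle}(C'))^{\alpha}$, since $(c\lambda)^{r^{\alpha}}e^{cr}\to 0$ for small $\lambda$. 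Second, your Peierls sum over cluster-paths does not converge as described, because the corridor separating $C$ from $C'$ is only guaranteed to have length exceeding the \emph{minimum} of the two weights raised to the power $\alpha$: transmission from a huge cluster to a nearby small cluster (weight $1$, say) costs essentially nothing, so the per-step transmission kernel is not uniformly small and ``the expected number of clusters ever infected'' need not be finite even in the subcritical regime. Closing this requires the finer recursive structure of the cumulative merging (small clusters must be absorbed into the low-degree background rather than counted as path steps), which is precisely the multiscale stable-set machinery of \cite{MS} and is absent from your outline. The claim that $\alpha\geq 5/2$ ``emerges at the balancing step'' is plausible in spirit but is not substantiated by the estimates you propose.
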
 
Thanks to this result,  Theorem \ref{the} will follow from the following proposition.
\begin{proposition} \label{lch}
Fix $s > 102$.   There exists a positive constant $\vartriangle$, such that the partition $\sC(\tilde{G}_s, r_{\vartriangle},  5/2)$ has no infinite cluster a.s. 
\end{proposition}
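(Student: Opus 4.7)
My strategy is to reduce $\sC(\tilde{G}_s,r_{\vartriangle},5/2)$ to a cumulatively merged partition on the nearest-neighbour graph $\mathbb{Z}$ with i.i.d.\ block weights, and then invoke Proposition \ref{pms} in dimension $d=1$ with $\alpha=5/2$, for which the critical moment exponent is $\gamma=(4\alpha d)^2=100$.

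For the reduction, I let $f(x)$ denote the index of the block $[K_i,K_{i+1})$ containing $x$, and set
\[W_i \;=\; \sum_{x\in[K_i,K_{i+1})} r_{\vartriangle}(x).\]
The cut-point property forbids any edge of $G_s$ from straddling a cut-point, so a single step in $\tilde{G}_s$ can change the block index by at most one, whence $d_{\tilde{G}_s}(x,y)\ge|f(x)-f(y)|$. Combined with the identity $r_{\vartriangle}(f^{-1}(\tilde C))=\sum_{i\in\tilde C}W_i$, this lets me check that the preimage under $f$ of any $(\{W_i\},5/2)$-admissible partition of $\mathbb{Z}$ is $(r_{\vartriangle},5/2)$-admissible on $\tilde{G}_s$. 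Since $\sC(\tilde{G}_s,r_{\vartriangle},5/2)$ is the finest such partition, an infinite cluster there would force one in the corresponding CMP on $\mathbb{Z}$. A minor nuisance is that the degree of a cut-point depends on the adjacent block as well, so the $W_i$ are only $1$-dependent; a routine splitting of each cut-point's weight between its two neighbouring blocks produces a genuinely i.i.d.\ upper bound, to which Proposition \ref{pms} applies directly.

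By Proposition \ref{pms} it then suffices to choose $\vartriangle$ so that $\E(W^{100})<\beta_c(5/2)^{100}$. I bound crudely $W\le 2|E|$, where $|E|$ is the number of edges incident to the block; conditional on the block size $D=K_1$, $|E|$ is a sum of independent Bernoullis of total mean $O(D)$ (because $\sum_{y\ne x}|x-y|^{-s}<\infty$ for $s>1$), so Poisson-type moment estimates yield $\E(|E|^{100}\mid D)\le C(1+D^{100})$. Using Proposition \ref{pgd}, $T$ is dominated by a geometric variable and each $\varepsilon_i$ has tail $\ell^{2-s}/(s-2)$; balancing these two contributions produces $\pp(D>t)\lesssim t^{2-s+o(1)}$, so that $\E(D^{100})<\infty$ exactly when $s>102$. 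For such $s$, $2|E|$ is an integrable envelope for $W$, and since degrees are almost surely finite $r_{\vartriangle}(x)\to 0$ pointwise as $\vartriangle\to\infty$; dominated convergence then yields $\E(W^{100})\to 0$, and we may pick $\vartriangle$ to meet the target bound.

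The main obstacle I foresee is not the moment computation, which is routine, but rather the bookkeeping required to turn the block weights into an honest i.i.d.\ sequence in the presence of cut-points whose degrees involve two blocks. Once that is handled, the threshold $s>102$ appears transparently as the sum $100+2$, i.e.\ the moment exponent $\gamma=100$ of Proposition \ref{pms} plus the tail exponent $s-2$ of $\varepsilon_i$ in Proposition \ref{pgd}.
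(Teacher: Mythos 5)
Your overall route is the paper's: project onto the cut-point blocks to dominate $\sC(\tilde{G}_s,r_{\vartriangle},5/2)$ by a CMP on $\Z$ with i.i.d.\ block weights, invoke Proposition \ref{pms} with $d=1$, $\alpha=5/2$, $\gamma=(4\alpha d)^2=100$, and then make the $100$th moment of the block weight small by choosing $\vartriangle$ large. Your identification of the threshold $s>102$ as ``the $100$th moment of $D$ is finite because the tail exponent $s-2$ of the $\varepsilon_i$ exceeds $100$'' is exactly what the paper's H\"older computation with $p=1+(s-102)/200$ encodes, and your dominated-convergence conclusion replaces the paper's explicit bound $Mf(\vartriangle)\to 0$. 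You are also right --- and arguably more careful than the paper, which asserts the $Z_i$ are i.i.d.\ outright --- that the weight of the cut-point $K_i$ involves edges into $[K_{i-1},K_i)$, so the naive block weights are only $1$-dependent and need a monotone i.i.d.\ envelope.

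The one step that would fail as written is the claim that, conditional on the block size $D=K_1$, the edge count $|E|$ is a sum of independent Bernoullis of total mean $O(D)$. The conditioning event ``$0$ and $K_1$ are consecutive cut-points'' is neither increasing nor decreasing: it forbids edges straddling $0$ and $K_1$, but it \emph{forces}, for every $k\in(0,K_1)$, the presence of at least one edge straddling $k$. Hence the conditional law of the edge field is not a product measure and in fact biases degrees upward, so independence cannot be invoked. This is precisely where the paper does real work: it conditions on the exploration $(X_k)$ of Proposition \ref{pgd}, observes that a vertex $j\in(x_{k-1},x_k)$ can only be joined to $[x_{k-2},x_{k+1}]$ with at most one forced edge (two for $j=x_k$), and deduces the stochastic domination $\deg(j)\prec 2+Y$ with $Y$ an unconditioned degree satisfying $\pp(Y=k)\le C^k/k!$. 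Once that domination is proved, your estimate $\E(|E|^{100}\mid D)\le C(1+D^{100})$ is correct and the rest of your argument goes through; but the domination is a statement that must be established, not a consequence of independence, which is lost under the conditioning.
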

\subsection{Proof of Proposition \ref{lch}}
Let $\sC_1$ and $\sC_2$ be two CMPs. We write $\sC_1 \preceq \sC_2$, if there is a coupling such that $\sC_1$ has an infinite cluster only if $\sC_2$ has an infinite cluster.   
\begin{lemma} \label{lcp} We have
\begin{align} \label{bb}
\sC(\tilde{G}_s, r_{\vartriangle}, 5/2) \preceq \sC(\mathbb{Z}, Z_{\vartriangle}, 5/2),
\end{align}
with
\[Z_{\vartriangle} = \sum \limits_{0 \leq x < K_1} \deg(x) 1(\deg(x) \geq \vartriangle).\]
\end{lemma}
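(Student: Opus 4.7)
The strategy is to lift a $(Z_\vartriangle,5/2)$-admissible partition of $\mathbb{Z}$ into an $(r_\vartriangle,5/2)$-admissible partition of $\tilde{G}_s$, and then invoke the fact that $\sC(\tilde{G}_s,r_\vartriangle,5/2)$, as the finest admissible partition on $\tilde{G}_s$, can only be a refinement of any such partition. The argument rests on two structural observations: each cut-point is a genuine cut-vertex of $\tilde{G}_s$, and the total $r_\vartriangle$-weight of a union of blocks equals the sum of the corresponding i.i.d.\ block weights.

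First, I would set up the coupling. For $i\in\mathbb{Z}$ put $Z_\vartriangle^{(i)}:=\sum_{K_i\le x<K_{i+1}}r_\vartriangle(x)$, which by Lemma \ref{lgd}(ii) is an i.i.d.\ sequence distributed as $Z_\vartriangle$. Equip $\mathbb{Z}$ with these weights and let $\tilde{\sC}=\sC(\mathbb{Z},Z_\vartriangle,5/2)$ be realised from this coupling. Define a partition $\mathcal{P}$ of $V(\tilde{G}_s)$ by declaring $x\sim_{\mathcal{P}}y$ whenever the block indices of $x$ and $y$ lie in the same cluster of $\tilde{\sC}$; equivalently, each cluster of $\mathcal{P}$ has the form $\bigcup_{i\in\tilde{C}}[K_i,K_{i+1})$ for some $\tilde{C}\in\tilde{\sC}$.

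The next step is to verify that $\mathcal{P}$ is $(r_\vartriangle,5/2)$-admissible on $\tilde{G}_s$. The weight identity $r_\vartriangle(C)=\sum_{i\in\tilde C}Z_\vartriangle^{(i)}=Z_\vartriangle(\tilde C)$ is immediate from the definition. For the distance, if $x$ lies in block $i$ and $y$ in block $j$ with $i<j$, every $x$-to-$y$ path in $\tilde{G}_s$ must visit the cut-vertices $K_{i+1},\ldots,K_j$ in order and thus uses at least $j-i$ edges; this gives $d_{\tilde{G}_s}(C,C')\ge d_{\mathbb{Z}}(\tilde C,\tilde C')$ for any two distinct clusters of $\mathcal{P}$. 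Combined with the admissibility of $\tilde{\sC}$ this yields
\[
d_{\tilde{G}_s}(C,C')\;\ge\;d_{\mathbb{Z}}(\tilde C,\tilde C')\;>\;\min\{Z_\vartriangle(\tilde C),Z_\vartriangle(\tilde C')\}^{5/2}\;=\;\min\{r_\vartriangle(C),r_\vartriangle(C')\}^{5/2},
\]
so $\mathcal{P}$ is admissible.

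To conclude, $\sC(\tilde{G}_s,r_\vartriangle,5/2)$ refines $\mathcal{P}$. By Proposition \ref{pgd} each block $[K_i,K_{i+1})$ is a.s.\ finite, so whenever $\tilde{\sC}$ has no infinite cluster every cluster of $\mathcal{P}$, being a finite union of finite blocks, is finite, and the same holds \emph{a fortiori} for $\sC(\tilde{G}_s,r_\vartriangle,5/2)$; this is exactly \eqref{bb}. The step I expect to write most carefully is the path-length inequality $d_{\tilde{G}_s}(x,y)\ge j-i$: the cut-vertex property must be used to show not merely that every intermediate $K_\ell$ is visited, but that consecutive visits among $K_{i+1},\ldots,K_j$ each cost at least one edge, which can be done cleanly by induction on $j-i$ using that $K_{i+1}$ disconnects $x$ from the rest of the graph beyond.
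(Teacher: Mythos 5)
Your proposal is correct and follows essentially the same route as the paper: lift an admissible partition of $\mathbb{Z}$ with the i.i.d.\ block weights $(Z_i)$ to a partition of $\tilde{G}_s$ whose clusters are unions of blocks $[K_i,K_{i+1})$, check admissibility via the weight identity and the distance comparison $d_{\tilde{G}_s}(\tilde C,\tilde C')\ge d_{\mathbb{Z}}(p(\tilde C),p(\tilde C'))$, and conclude using the a.s.\ finiteness of the blocks. The only cosmetic difference is that you lift the CMP of $\mathbb{Z}$ directly and invoke the ``finest admissible partition'' characterization, while the paper argues by contradiction with a generic admissible partition separating $p(x_0)$ from $p(y_0)$; your justification of the distance inequality via the cut-point structure is in fact slightly more explicit than the paper's.
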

\begin{proof}
For $i \in \Z$, we define
\[Z_i= \sum \limits_{K_i \leq x < K_{i+1}} \deg(x) 1(\deg(x) \geq \vartriangle). \]
Then $(Z_i)_{i \in \Z}$ is a sequence of i.i.d. random variables with the same distribution as $Z_{\vartriangle}$, since the graph $\tilde{G}_s$ is composed of i.i.d. subgraphs  $[K_i,K_{i+1})$.   Therefore, $\sC(\mathbb{Z}, (Z_i), 5/2)$ has the same law as $\sC(\mathbb{Z}, Z_{\vartriangle}, 5/2)$. Thus to prove Lemma \ref{lcp}, it remains to show that 
\begin{equation} \label{gzi}
\sC(\tilde{G}_s, r_{\vartriangle}, 5/2)  \preceq \sC(\mathbb{Z}, (Z_i), 5/2).
\end{equation}
For any subset $A$ of the vertices of $\tilde{G}_s$, we define its projection
\[p(A)=\{i \in \Z:  A \cap [K_i,K_{i+1}) \neq \varnothing\}.\]
Since all  intervals $[K_i,K_{i+1})$ have finite mean, if $|A|= \infty$ then $|p(A)|=\infty$. Therefore, to prove  \eqref{gzi}, it suffices to  show that 
\begin{equation} \label{xsy}
x \sim y  \textrm{ in } \sC(\tilde{G}_s, r_{\vartriangle}, 5/2) \quad  \textrm{ implies } \quad p(x) \sim p(y)  \textrm{ in } \sC(\mathbb{Z}, (Z_i), 5/2).
\end{equation}
We prove \eqref{xsy} by contradiction. Suppose that there exist $x_0$ and $y_0$ such that $x_0 \sim y_0  $  in  $\sC(\tilde{G}_s, r_{\vartriangle}, 5/2)$ and $ p(x_0) \not \sim p(y_0) $   in  $ \sC(\mathbb{Z}, (Z_i), 5/2)$. Then by definition there exists $\kC$, a $((Z_i), 5/2)$-admissible partition of $\Z$, such that $p(x_0) \not \sim p(y_0)$ in $\kC$.

\vspace{0.2 cm}
 We define a partition $\tilde{\kC}$ of $\tilde{G}_s$ as follows:
\[x \sim y \textrm{ in } \tilde{\kC} \quad \textrm{ if and only if } \quad p(x) \sim p(y) \textrm{ in } \kC.\]
In other words, an element in $\tilde{\kC}$ is  $\cup_{i \in C} [K_i,K_{i+1})$ with $C$ a set  in $\kC$. We now claim that $\tilde{\kC}$ is $(r_{\vartriangle}, 5/2)$-admissible. Indeed, let $\tilde{C}$ and $\tilde{C}'$ be two different sets in $\tilde{\kC}$. Then by the definition of $\tilde{\kC}$, we have  $p(\tilde{C})$ and $p(\tilde{C}')$ are two different sets in $\kC$ and 
\[Z(p(\tilde{C})):= \sum_{i \in p(\tilde{C})} Z_i = \sum_{x \in \tilde{C}} \deg(x) 1(\deg(x) \geq \vartriangle)=r_{\vartriangle}(\tilde{C}).\]
 Moreover, since these intervals $[K_i,K_{i+1})$ are disjoint,  
\[d(\tilde{C},\tilde{C}') \geq d(p(\tilde{C}),p(\tilde{C}')). \]
On the other hand, as $\kC$ is $((Z_i),5/2)$-admissible,
\[d(p(\tilde{C}),p(\tilde{C}')) > \min \{Z(p(\tilde{C})),Z(p(\tilde{C}'))\}^{5/2}.\]
It follows from the last three inequalities that 
\[d(\tilde{C},\tilde{C}') > \min \{r_{\vartriangle}(\tilde{C}),r_{\vartriangle}(\tilde{C}')\}^{5/2}, \]
which implies that $\tilde{\kC}$ is $(r_{\vartriangle}, 5/2)$-admissible.

\vspace{0.2 cm}
Let $C_0$ and  $C_0'$  be the two sets  in the partition $\kC$ containing $p(x_0)$ and $p(y_0)$ respectively. Then by assumption $C_0 \neq C_0'$.  We define 
\[\tilde{C}_0 = \bigcup_{i \in C_0} [K_i,K_{i+1}) \qquad \textrm{ and } \qquad \tilde{C}_0' = \bigcup_{i \in C_0'} [K_i,K_{i+1}). \]
Then  both $\tilde{C}_0$ and $  \tilde{C}'_0 $ are in $ \tilde{\kC}$, and  $\tilde{C}_0 \neq \tilde{C}'_0$. Moreover $\tilde{C}_0 $ contains $x_0$ and $\tilde{C}'_0 $ contains $y_0$. Hence $x_0 \not \sim y_0$ in  $\tilde{\kC}$ which is a $(r_{\vartriangle}, 5/2)$-admissible partition. Therefore, $x_0 \not \sim y_0$ in $\sC(\tilde{G}_s, r_{\vartriangle}, 5/2)$, which leads to a contradiction. Thus \eqref{xsy} has been proved.
\end{proof}
\noindent We now apply Proposition \ref{pms} and Lemma \ref{lcp} to prove Proposition \ref{lch}. To do that, we fix a positive constant $\beta < \beta_c (5/2)$ with $\beta_c(5/2)$ as in Proposition \ref{pms} with $d=1$ and rewrite 
\[Z_{\vartriangle} = \beta \frac{Z_{\vartriangle}}{\beta}.\]
If we can show that there is $\vartriangle = \vartriangle(\beta,s)$, such that
\begin{align} \label{eq}
\E\left( \left( \frac{Z_{\vartriangle}}{\beta}\right)^{100} \right) \leq 1,
\end{align}
 then Proposition \ref{pms} implies that a.s.  $\sC(\mathbb{Z}, Z_{\vartriangle}, 5/2)$ has  no infinite cluster. Therefore, by Lemma \ref{lcp},  there is no infinite cluster in  $\sC(\tilde{G}_s, r_{\vartriangle}, 5/2)$ and thus Proposition \ref{lch} follows. Now it remains to prove \eqref{eq}.

It follows from Proposition \ref{pgd} (i)  that 
\begin{align} \label{ng1}
\E(K_1^{100}) = \E(D^{100}) = \E \left( \left(\sum \limits_{i=0}^{T } \varepsilon _i \right)^{100} \right), 
\end{align}
where $T$ and $(\varepsilon_i)$ are as in Proposition \ref{pgd}. 

Applying the inequality   $(x_1+\ldots+ x_n)^{100} \leq n^{99}(x_1^{100}+ \ldots+ x_n^{100})$  for any $n\in \mathbb{N}$ and $x_1,\ldots,x_n \in  \mathbb{R}$, we  get  
\begin{eqnarray} \label{bfk}
 \E \left( \left(\sum \limits_{i=0}^{T } \varepsilon _i \right)^{100} \right) & \leq &  \E\left[ (T+1)^{99}  \sum \limits_{i=0}^{T} \varepsilon_i^{100}  \right] \notag \\
& = & \sum \limits_{i=0}^{\infty}   \E\left[ (T+1)^{99}   \varepsilon_i^{100} 1(T \geq i)  \right].   
\end{eqnarray}
Let $p = 1 + (s-102)/200 >1$ and $q$ be its conjugate, i.e. $p^{-1}+q^{-1}=1$. Then applying  H\"older's inequality, we obtain
\begin{align} \label{ngn}
\E\left[ (T+1)^{99}   \varepsilon_i^{100} 1(T \geq i) \right] \leq \E \left( (T+1)^{99q} \right)^{1/q} \E \left(\varepsilon_i^{100p}1(T\geq i) \right)^{1/p}.
\end{align}
On the other hand,  
\begin{equation} \label{gtn}
 \E \left(\varepsilon_i^{100p}1(T\geq i) \right) =  \E \left(\varepsilon_i^{100p} \mid T\geq i \right) \pp(T \geq i).
\end{equation}
Using Proposition \ref{pgd} (iii) we have for $i\geq 1$
\begin{eqnarray*}
 \E \left(\varepsilon_i^{100p} \mid T\geq i \right) & \leq & 100p\sum \limits_{\ell =0}^{\infty} \pp(\varepsilon_i > \ell \mid T \geq i) (\ell+1)^{100p-1} \\
 & \leq & 100p \left[1+  \sum \limits_{\ell \geq 1} \ell^{2-s} (1+ \ell)^{100p-1}/(s-2) \right]\\
 &\leq & C_1= C_1(s) < \infty,
\end{eqnarray*}
since by definition
\[2-s+ 100p-1 = -1 -(s-102)/2< -1.\]
Hence for all  $i\geq 1$
\begin{align}\label{ng2}
\E \left(\varepsilon_i^{100p}1(T\geq i) \right) \leq C_1\pp(T\geq i).
\end{align}
It follows from \eqref{ng1}, \eqref{bfk}, \eqref{ngn} and \eqref{ng2} that 
\begin{eqnarray} \label{huz}
 \E(K_1^{100}) & \leq & \E\left[(T+1)^{99q}\right]^{1/q} \left[1 +   \sum_{i=1}^{\infty} \left( C_1 \pp(T\geq i)\right)^{1/p} \right] \notag \\
& = & M < \infty,
\end{eqnarray}
since $T$ is stochastically dominated by a geometric random variable. 

For any $j \in \mathbb{Z}$ and any  interval $I$, we denote by $\deg_{I}(j)$ the number of  neighbors of $j$ in $I$ when we consider the original graph (without conditioning on $0$ being a cut-point).

Now for any non decreasing sequence $(x_k)_{k\geq 1}$ with $x_1 \geq 1$, conditionally on  $\varepsilon_1 =x_1 -1, \varepsilon_2 =x_2 -x_1, \ldots$, we have for all $j \in (x_{k-1}, x_k)$, 
\begin{align*}
 \deg(j) \prec 1+ \deg_{[x_{k-2} , x_{k+1})}(j),
\end{align*}
where $\prec$ means stochastic domination. 

Indeed, the conditioning implies that $j$ is only connected   to  vertices in $[x_{k-2},x_{k+1}]$ and that there is a vertex in $[x_{k-1},x_k)$ connected to $x_{k+1}$.

Similarly, if $j=x_{k}$, it is only connected to  vertices in $[x_{k-2},x_{k+2}]$. Moreover,  $j$ is connected  to at least one vertex in $[x_{k-2},x_{k-1})$ and there is a vertex in $[x_k, x_{k+1})$ connected  to $x_{k+2}$. Therefore, 
\begin{align*}
\deg(x_{k}) \prec 2+ \deg_{[x_{k-2} , x_{k+2})}(x_k).
\end{align*}
In conclusion, conditionally on  $j \in [0,K_1)$,
\begin{align*}
\deg(j) \prec 2 + Y,
\end{align*}
where 
\[ Y= \deg_{(- \infty, + \infty)}(j).\]
Hence,
\begin{align} \label{bd1}
\E \left(\deg(j)^{100} 1 (\deg(j) \geq \vartriangle) \mid j \in [0, K_1) \right)  \leq \E \left((2+Y)^{100} 1 (Y\geq \vartriangle -2) \right).
\end{align}
On the other hand,
\begin{eqnarray*}
\pp(Y= k) &= & \pp( \deg_{(-\infty,+\infty)}(0) = k)  \\
&\leq & \pp(\deg_{(-\infty,+\infty)}(0) \geq k) \\
 &\leq &  \sum \limits_{i_1<i_2< \ldots<i_k} |i_1|^{-s} |i_2|^{-s} \ldots |i_k|^{-s} \\
 &\leq &\frac{1}{k!} \sum \limits_{i_1,i_2, \ldots,i_k} |i_1|^{-s} |i_2|^{-s} \ldots |i_k|^{-s} \\
 &= & \frac{1}{k!} \left( 2 \sum \limits_{i \geq 1}  i^{-s} \right) ^k   = \frac{C^k}{k!}, 
\end{eqnarray*}
with $C= 2 \sum_{i \geq 1} i^{-s}$. Therefore,
\begin{eqnarray} \label{bd2}
\E\left( (2   + Y)^{100} 1 (Y\geq \vartriangle -2) \right) & \leq & \sum \limits_{k \geq \vartriangle -2}  \frac{C^k (k+2)^{100} }{k!}  \notag\\
& := & f(\vartriangle).
\end{eqnarray}
It follows from \eqref{huz}, \eqref{bd1} and \eqref{bd2} that 
\begin{eqnarray*}
\E(Z_{\vartriangle}^{100}) & = & \E \left[ \left( \sum \limits_{0 \leq j < K_1} \deg(j) 1(\deg(j) \geq \vartriangle)\right)^{100}\right] \\
& \leq & \E \left[ K_1 ^{99} \sum \limits_{0 \leq j < K_1} \deg(j)^{100} 1(\deg(j) \geq \vartriangle)\right] \\
& \leq & \E(K_1 ^{100}) f(\vartriangle) \\
& \leq & M f(\vartriangle).
\end{eqnarray*}
Since $f(\vartriangle) \rightarrow 0$ as $\vartriangle \rightarrow \infty$, there exists $\vartriangle_0   \in (0,  \infty)$, such that  $M f(\vartriangle_0) \leq \beta^{100}$ and thus \eqref{eq} is satisfied. \hfill $\square$

\section*{Appendix: a lower bound on $\beta_c$}
In \cite{MS},   Proposition 3.7  (our Proposition \ref{pms}) follows from Lemmas 3.9, 3.10, 3.11 and a conclusion argument. Let us find in their proof a lower  bound on $\beta_c$. 

At first, they define a constant $c=2\alpha d +1$ and some sequences 
\begin{align*}
L_n = 2^{c^n} \quad \textrm{and} \quad R_n= L_1 \ldots L_n \quad \textrm{and} \quad \varepsilon_n = 2^{-2dc^{n+1}}.
\end{align*}
In Lemma 3.9, the authors do  not use any information on  $Z$ and $\beta$. They  set a constant $k_0= [2^{d+1}(c+1)]$.

In Lemma 3.10, they suppose that $\beta \leq 1$ and the information concerning $Z$  is as follows.  There exists $n_0$, such that for all $n \geq n_0$, we have
\begin{align*}
2^{d} \E(Z^{\gamma})L_{n+1}^{-\mu} \leq 1/2,
\end{align*}
with 
\[\mu= \frac{\gamma -1}{2 \alpha} - 3d - 4 \alpha d^2 >0. \]
In fact, under the assumption $\en(Z^{\gamma}) \leq 1$, we can take 
\begin{align} \label{n0}
n_0= \left[ \frac{\log \left( \frac{d+1}{\mu }\right)}{\log c} \right].
\end{align}
In Lemma 3.10, they also assume that $\beta \leq 1$ and  define a constant $n_1$, such that $n_1 \geq n_0$ and for all $n \geq n_1$
\[3k_0^{\alpha +1} L_{n+1} \leq \frac{R_{n+1}}{20} ,\]
or equivalently, 
\begin{align} \label{n1}
60 k_0^{\alpha +1}  \leq R_n.
\end{align}
In the conclusion leading to the proof of  \cite[Proposition 3.7]{MS}, a lower bound on $\beta_c$ is implicit. Indeed, with Lemmas 3.9, 3.10, 3.11 in hand, the authors only require that 
  \begin{align} \label{ern}
  \pn(\kE(R_{n_1})) \geq 1 - \varepsilon_{n_1},
  \end{align}
 where for any $N \geq 1$ 
\[\kE(N)= \{\textrm{there exits a stable set $S$ such that } \llbracket N/5, 4N/5 \rrbracket ^d \subset S \subset \llbracket 1, N \rrbracket ^d  \}.\]
 We do not recall the definition of stable sets here. However, we notice that by the first part of Proposition 2.5  and Corollary 2.13 in \cite{MS}, the event $\kE(N)$ occurs when the weights of all  vertices in $\llbracket 1, N \rrbracket ^d$ are less than $1/2$. Therefore
\begin{eqnarray*}
\pn(\kE(N)) &\geq& \pn \left( r(x) \leq 1/2 \textrm{ for all } x \in \llbracket 1, N \rrbracket ^d \right) \\
& = & \pn(\beta Z \leq 1/2)^{N^d}  \\
& =&  \left( 1 - \pn(\beta Z > 1/2) \right) ^{N^d} \\
& = & \left( 1 - \pn(Z^{\gamma} > (2\beta)^{-\gamma}) \right) ^{N^d} \\
& \geq &  \left( 1 - (2\beta)^{\gamma}\en(Z^{\gamma}) \right) ^{N^d}.
\end{eqnarray*} 
Hence \eqref{ern} is satisfied if
   \[\left( 1 - (2\beta)^{\gamma}\en(Z^{\gamma}) \right) ^{R_{n_1}^d} \geq (1- \varepsilon_{n_1}),\]
   or equivalently 
 \[(2\beta)^{\gamma}\E(Z^{\gamma}) \leq 1-(1-\varepsilon_{n_1})^{R_{n_1}^{-d}}.\]
Hence, under the assumption  $\E(Z^{\gamma})  \leq 1$,   we can take 
\[\beta_c = \frac{1}{2} \left(1-(1-\varepsilon_{n_1})^{R_{n_1}^{-d}} \right)^{1/ \gamma},\]
with $n_1$ as in \eqref{n1}.

\ACKNO{
 I am deeply grateful to my  advisor Bruno Schapira for his help and many suggestions during the preparation of this work. I would like also to thank the referee for a careful reading this paper as well as many valuable comments.}

\end{document}